\newtheorem{thm}{Theorem}[section]
\newtheorem{cor}[thm]{Corollary}
\newtheorem{lemma}[thm]{Lemma}
\newtheorem{prop}[thm]{Proposition}
\theoremstyle{definition}
\newtheorem{defn}[thm]{Definition}
\newtheorem{remark}[thm]{Remark}
\newcommand{\Span}{\operatorname{span}}
\newcommand{\bb}[1]{\mathbb{#1}}
\newcommand{\cl}[1]{\mathcal{#1}}
\newcommand{\ff}[1]{\mathfrak{#1}}
\newcommand{\Ran}{\operatorname{Ran}}
\newcommand{\Ker}{\operatorname{Ker}}
\begin{document}

%\setcounter{thm}{1}
% \title[short text for running head]{full title}
\title[The Operator System Generated by Cuntz Isometries]{The Operator System Generated by Cuntz Isometries}

%    Only \author and \address are required; other information is
%    optional.  Remove any unused author tags.

%    author one information
% \author[short version for running head]{name for top of paper}
\author[Da Zheng]{Da Zheng}
\address{Department of Mathematics, University of Houston, 4800 Calhoun Road, PGH Hall, office 679, TX 77204}
\email{dazheng@math.uh.edu}
\subjclass[2000]{ Primary 46L06, 46L07; Secondary 46L05, 47L25.}
\thanks{This research was supported in part by NSF grant DMS-1101231.}
\date{\today}
\begin{abstract}
In this paper we consider the operator system $\cl{S}_n$ generated by $n$ Cuntz isometries, i.e. the span of the generators of the Cuntz algebra $\cl{O}_n$ 
together with their adjoints and the identity. %We first show that $\cl{S}_n$ has some universal property and  for $n$ being finite, $\cl{S}_n$ 
%can be embedded completely order isomorphically in $\cl{S}_\infty$. 
We define an operator subsystem $\cl{E}_n\subseteq M_{n+1}$ and then prove that $\cl{S}_n$ is completely order
isomorphic to a quotient of $\cl{E}_n$. This result implies a characterization of positive elements in $M_p(\cl{S}_n)$.
\end{abstract}
\maketitle
%%%%%%%%%%%%%%%%%%%%%%%%%%%%%%%%%%%%%%%%%%%%%%%%%%%%%%%%%
 \section{Introduction}
 Farenick and Paulsen \cite{farenick2011operator} studied the operator system $\ff{S}_n$ generated by $n$ universal unitaries in $C^*(F_n)$, where 
 $F_n$ is the discrete free group on $n$ generators, and they showed that 
 $\ff{S}_n$ is completely order isomorphic to a quotient of the operator system $\ff{T}_n$ of tridiagonal matrices \cite[Theorem 4.2]{farenick2011operator}. By using this isomorphism,
 they gave a new equivalent condition for a $C^*$-algebra to have WEP (weak expectation property), which they called property $\ff{S}$ \cite[Theorem 6.2]{farenick2011operator}.  
 In that paper,they also
 defined an operator system $\ff{W}_n\subseteq C^*(F_n)$ and proved that  $\ff{W}_n$ is completely order isomorphic to a quotient of the matrix algebra
 $M_n$ and this leads to a new proof of Kirchberg's theorem  \cite[Corollary 3.2]{farenick2011operator}. 
 
 The Cuntz algebra is generated by universal isometries with certain relations. So one may hope that the universal property of these isometries can give us some analogous  results 
 as mentioned in the last paragraph. This motivates us to study the operator system generated by Cuntz isometries, which will be 
 denoted by $\cl{S}_n$ throughout, and utilize the universal property of the isometries  to derive properties of $\cl{S}_n$. 
 Moreover, we may hope that $\cl{S}_n$ can offer us new characterizations of 
 some important classes of $C^*$-algebras, even new classes of $C^*$-algebras. 
 
 Thus, we start exploring $\cl{S}_n$ in this paper. In section 3, we focus on $\cl{S}_n$ itself and prove that it has  a universal property by observing that $\cl{S}_n$ 
 is completely order isomorphic to $\hat{\cl{S}}_n$--the operator system generated by the Teoplitz-Cuntz isometries. In section 4, we define an
 operator subsystem $\cl{E}_n$ in the matrix algebra $M_{n+1}$ and construct a unital completely positive map from $\cl{E}_n$
 to $\cl{S}_n$. In section 5, we prove that $\cl{S}_n$ is completely order isomorphic to a quotient of $\cl{E}_n$ and 
give a characterization of positive elements in $M_p(\cl{S}_n)$.
 
  \begin{remark}\label{introremark}
  By the uniqueness of the Cuntz algebra, we know that if $\cl{S}_n$ and $\cl{T}_n$ are two operator systems generated by $n$ Cuntz isometries, then 
  $\cl{S}_n$ is unitally completely order isomorphic to $\cl{T}_n$. Conversely, 
if $\cl{S}_n$ is unitally completely order isomorphic to $\cl{T}_n$, then $C^*(\cl{S}_n)\cong C^*(\cl{T}_n)$, which would lead to a new proof of the uniqueness of the Cuntz algebra. 
%However, it is still unknown that  $\cl{S}_n$ is completely order isomorphic to $\cl{T}_n$, but i
This reveals that we may prove some properties of the Cuntz algebra via
 $\cl{S}_n$. Conversely, it is also interesting to explore how many nice properties $\cl{S}_n$ can inherit from $\cl{O}_n$.
  \end{remark}

 %In a subsequent paper, we will use the results on $\cl{S}_n$ to study the tensor products of it in the operator system category. 
 
 %%%%%%%%%%%%%%%%%%%%%%%%%%%%%%%%%%%%%%%%%%%%%%%%%%%%%%%%%
 \section{Preliminaries}
 \subsection{Operator Systems and Completely Positive Maps}
 These concepts are quite familiar to most people so we mention the very basic ones and the notations which we will adopt throughout the paper for the sake of convenience.
 See \cite{paulsen2002completely} for further details of this topic. 
  \begin{defn}[Concrete Operator System]
 A \textbf{concrete operator system} $\cl{S}$ is a unital $*$-closed  subspace of some unital $C^*$-algebra $\cl{A}$, that is,
 $\cl{S}\subseteq \cl{A}$ is a subspace of $\cl{A}$ such that $a\in \cl{S}\Rightarrow a^*\in \cl{S}$ and $1_{\cl{A}}\in \cl{S}$. Here, $1_{\cl{A}}$ denotes
 the unit of $\cl{A}$. 
 \end{defn}
Henceforth, we shall denote the unit of an arbitrary operator system $\cl{S}$ by $1_{\cl{S}}$. 
  \begin{defn}[Abstract Operator System]
 An \textbf{abstract operator system} $\cl{S}$ is a matrix-ordered $*$-vector space with an Archimedean matrix order unit.
 \end{defn}
  We write $M_n(\cl{S})^+$, $n\in \bb{N}$ for the positive cones of $\cl{S}$ and $(a_{ij})\geq 0$ if  $(a_{ij})\in M_n(\cl{S})^+$.
    \begin{defn}
Let $\cl{S}$ and $\cl{T}$ be operator systems. A linear map $\phi:\cl{S}\to \cl{T}$ is called  \textbf{completely positive} if
\[ \phi^{(n)}((a_{ij})):=(\phi(a_{ij}))\geq 0, \quad \text{ for each } (a_{ij})\in M_n(\cl{S})^+ \text{ and for all } n\in \cl{N} .\]
 \end{defn}
  \begin{defn}%[Complete Order Isomorphism]
Let $\cl{S}$ and $\cl{T}$ be operators systems. A map $\phi:\cl{S}\to \cl{T}$ is called a \textbf{complete order isomorphism} if $\phi$ is a unital linear 
isomorphism and both $\phi$ and $\phi^{-1}$ are  completely positive, and we say that $\cl{S}$ is \textbf{completely order isomorphic} to $\cl{T}$ if such $\phi$
exists.
\end{defn}
\begin{remark}
 Indeed, the above definition of a complete order isomorphism is for ``unital complete order isomorphisms''. Since we will only use unital complete order 
 isomorphisms in this paper, it is convenient to omit ``unital'' in the above definition. 
\end{remark}

\begin{defn}%[Complete Order Injection] 
 A map $\phi$ is called a \textbf{complete order injection} if it is a complete order isomorphism onto its range with $\phi(1_{\cl{S}})$ being an
Archimedean order unit. We shall denote this by $\cl{S}\subseteq_{\operatorname{c.o.i}}\cl{T}$. 
\end{defn}
The following theorem indicates that every abstract operator system is completely order isomorphic to a concrete one. So from now on, we will not distinguish a 
abstract operator system from a concrete one and will call them just operator systems.
  \begin{thm}[Choi, Effros]\label{choieffros}
  Let $\cl{S}$ be an abstract operator system, then there exists a Hilbert space $\cl{H}$, a concrete operator system $\cl{S}_1\subseteq B(\cl{H})$,
  and a unital complete order isomorphism $\varphi:\cl{S}\to \cl{S}_1$. Conversely, a concrete operator system is also an abstract operator system.
 \end{thm}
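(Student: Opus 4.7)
The reverse direction is immediate: given a concrete operator system $\cl{S} \subseteq \cl{A}$, the assignment $M_n(\cl{S})^+ := M_n(\cl{S}) \cap M_n(\cl{A})^+$ is compatible with conjugation by scalar matrices, and $1_{\cl{A}}$ is an Archimedean matrix order unit because each $M_n(\cl{A})^+$ is norm-closed. The substance lies in the forward direction, for which my plan is to construct a separating family of unital completely positive (UCP) maps from $\cl{S}$ into matrix algebras.

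The first step would be a Hahn--Banach style lemma. Fix $n \in \bb{N}$ and a self-adjoint $A \in M_n(\cl{S})$ with $A \notin M_n(\cl{S})^+$. The Archimedean condition says precisely that $B + \epsilon (I_n \otimes 1_{\cl{S}}) \in M_n(\cl{S})^+$ for all $\epsilon > 0$ implies $B \in M_n(\cl{S})^+$; using this one shows that the order seminorm associated to $I_n \otimes 1_{\cl{S}}$ is a genuine norm on the self-adjoint part and that the cone $M_n(\cl{S})^+$ is closed in this norm. Hahn--Banach separation of $A$ from $M_n(\cl{S})^+$ then yields, after normalization at the order unit, a state $F: M_n(\cl{S}) \to \bb{C}$ with $F(A) < 0$.

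Next, I would convert $F$ into a UCP map via the Choi-type correspondence that sends a linear map $\phi: \cl{S} \to M_n$ to the linear functional
\[ \tilde{\phi}((a_{ij})) = \frac{1}{n} \sum_{i,j=1}^n \langle \phi(a_{ij}) e_j, e_i \rangle \]
on $M_n(\cl{S})$, where $e_1, \ldots, e_n$ is the standard basis of $\bb{C}^n$. This bijection matches UCP maps $\phi$ with states $\tilde{\phi}$, and the identity $n \tilde{\phi}(B) = \langle \phi^{(n)}(B) \xi, \xi \rangle$ for $\xi = e_1 \oplus \cdots \oplus e_n$ shows that, applied to the state $F$ above, it produces a UCP map $\phi_A: \cl{S} \to M_n$ for which $\phi_A^{(n)}(A)$ fails to be positive. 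Taking the Hilbert space direct sum of all such $\phi_A$, as $n$ and $A$ range over non-positive self-adjoints, gives a UCP map $\varphi: \cl{S} \to B(\cl{H})$ whose complete positivity together with the separation property makes it a complete order isomorphism onto its image.

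The main obstacle is the Hahn--Banach step, since one must verify both that the order seminorm is a norm making the cone closed and that the separating functional can be chosen real on self-adjoints and strictly positive at the order unit; both points rely essentially on the Archimedean property of the matrix order unit. The Choi correspondence and the direct sum construction are then routine bookkeeping.
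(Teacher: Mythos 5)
The paper states this theorem as classical background and gives no proof of its own, so there is nothing in the text to compare against; your outline is the standard Choi--Effros representation argument (separation, the Choi correspondence, direct sum), and most of it is sound. However, there is one genuine gap: the claim that the correspondence $\phi \mapsto \tilde{\phi}$ ``matches UCP maps with states'' is false in the direction you need. If $F$ is a state on $M_n(\cl{S})$ and $\phi$ is the associated linear map, then positivity of $F$ does give complete positivity of $\phi$, but unitality of $F$ only gives $\frac{1}{n}\tr(\phi(1_{\cl{S}}))=1$, i.e.\ $\phi(1_{\cl{S}})$ is a positive matrix of trace $n$, not necessarily $I_n$. (Already for $\cl{S}=\bb{C}$ and $n=2$, the state $X\mapsto \tr(\rho X)$ yields $\phi(1)=2\rho^{T}$.) This matters at the last step: since $\|\phi_A\|=\|\phi_A(1_{\cl{S}})\|$ can be as large as $n_A$, the direct sum $\oplus_A\phi_A$ need not even be bounded, and even when it is, $\varphi(1_{\cl{S}})$ need not be the identity, so the image is not obviously a concrete operator system with $\varphi(1_{\cl{S}})$ an Archimedean unit.

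The standard repair is a perturbation plus renormalization. Replace $F$ by $F_\delta=(1-\delta)F+\delta G$, where $G=\tilde{\psi}_0$ for the UCP map $\psi_0(x)=\omega(x)I_n$ and $\omega$ a state on $\cl{S}$ (whose existence follows from the same separation argument at level one); then $\phi_{F_\delta}(1_{\cl{S}})\geq \delta I_n$ is invertible while $F_\delta(A)<0$ for $\delta$ small. Now set $\psi(x):=\phi_{F_\delta}(1_{\cl{S}})^{-1/2}\phi_{F_\delta}(x)\phi_{F_\delta}(1_{\cl{S}})^{-1/2}$; this is UCP, and $\psi^{(n)}(A)$ is a congruence of $\phi_{F_\delta}^{(n)}(A)$, hence still not positive. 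With this correction your argument goes through: the Hahn--Banach step you flag is indeed the delicate point, but it is handled exactly as you indicate, with the Archimedean property giving closedness of the cone in the order norm and forcing the separating functional to be strictly positive at the order unit.
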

 %%%%%%%%%%%%%%%%%%%%%%%%%%%%%%%%%%%%%%%%%%%%%%%%%%%%%%%%%%%%%%%%%%%%%%%%%%%%%%%%
  \subsection{Operator System Quotients and Complete Quotient Maps}
 In this section, we review the definition of the operator system quotients and complete quotient maps as well as some basic facts which shall be used in our later discussion.
 Most of these can be found in \cite{kavruk2013quotients}. 

 \begin{defn}%\cite{kavruk2013quotients}
 Given an operator system $\cl{S}$, we call $J\subseteq \cl{S}$ a \textbf{kernel}, if $J=\ker\phi$ for an operator system $\cl{T}$ and
 some unital completely positive map $\phi:\cl{S}\to \cl{T}$.
\end{defn}
\begin{prop}\label{quotientoperatorsystemprop}%\cite{kavruk2013quotients}
Let $\cl{S}$ be an operator system and $J\subseteq \cl{S}$  be  kernel, if  we define a family of matrix cones on $\cl{S}/J$ by setting
 \begin{align*}
  C_n=&\{(x_{ij}+J)\in M_n(\cl{S}/J): \text{ for each } \epsilon>0, \text{ there exists } (k_{ij})\in M_n(J) \\
  &\text{ such that } \epsilon \otimes I_n+(x_{ij}+k_{ij})\in  M_n(\cl{S})^+\}.
 \end{align*}
 then $(\cl{S}/J,\{C_n\}_{n=1}^{\infty})$ is a matrix ordered $*$-vector space with an Archimedean matrix unit $1+J$, and the quotient map $q:\cl{S}\to \cl{S}/J$ is completely positive.
 \end{prop}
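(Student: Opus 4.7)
The plan is to verify, piece by piece, each ingredient of a matrix-ordered $*$-vector space with Archimedean matrix unit. The first observation is that any kernel $J$ must be $*$-closed: if $J=\ker\phi$ for a unital completely positive $\phi:\cl{S}\to\cl{T}$, then $\phi$ is $*$-preserving, so $x\in J$ forces $x^*\in J$. Consequently the involution descends to $\cl{S}/J$ via $(x+J)^*=x^*+J$, and every self-adjoint coset $(x_{ij}+J)\in M_n(\cl{S}/J)_h$ admits a self-adjoint representative, obtained by averaging $x_{ij}$ with $x_{ji}^*$.

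Next I would verify that each $C_n$ is a cone in $M_n(\cl{S}/J)_h$. Closure under addition is the standard Archimedeanization trick: given $(x+J),(y+J)\in C_n$ and $\epsilon>0$, apply the defining condition to each with $\epsilon/2$ to produce witnesses $(k_{ij}),(l_{ij})\in M_n(J)$, and add them. Closure under positive scalar multiplication is similar. The crucial compatibility $\alpha^* C_n\alpha\subseteq C_m$ for $\alpha\in M_{n,m}(\bb{C})$ requires a little rescaling: given $\epsilon>0$ and a nonzero $\alpha$, choose the Archimedeanization witness at level $\epsilon/\|\alpha\|^2$, obtaining $(\epsilon/\|\alpha\|^2)I_n+x_{ij}+k_{ij}\geq 0$; conjugating by $\alpha$ and using $\alpha^*\alpha\leq\|\alpha\|^2 I_m$ yields $\epsilon I_m+\alpha^*(x_{ij})\alpha+\alpha^*(k_{ij})\alpha\geq 0$, and $\alpha^*(k_{ij})\alpha\in M_m(J)$ since $J$ is a linear subspace. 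The self-adjointness of each $C_n$ uses that $M_n(\cl{S})^+$ is self-adjoint together with the $*$-closedness of $J$.

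To see that $1+J$ is a matrix order unit, take any $(x_{ij}+J)\in M_n(\cl{S}/J)_h$, lift it to a self-adjoint representative $(x_{ij})\in M_n(\cl{S})_h$, and invoke the fact that $1$ is a matrix order unit for $\cl{S}$ to find $r>0$ with $rI_n+(x_{ij})\in M_n(\cl{S})^+$; then $rI_n+(x_{ij}+J)\in C_n$ trivially, using the zero element of $M_n(J)$ as a witness for every $\epsilon$. The Archimedean property is built directly into the definition of $C_n$: if $rI_n+(x_{ij}+J)\in C_n$ for every $r>0$, then applying the condition with $r=\epsilon/2$ and the free parameter also set to $\epsilon/2$ produces a witness $(k_{ij})\in M_n(J)$ with $\epsilon I_n+x_{ij}+k_{ij}\geq 0$, which is precisely membership of $(x_{ij}+J)$ in $C_n$. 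Complete positivity of $q:\cl{S}\to\cl{S}/J$ is immediate: a genuine positive element in $M_n(\cl{S})$ satisfies the Archimedeanization condition with $(k_{ij})=0$ for every $\epsilon>0$.

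The argument is essentially bookkeeping, with no single deep step; the main subtlety to watch out for is the $\|\alpha\|^2$-rescaling in the matrix compatibility check, which is the reason the definition of $C_n$ uses an existential $\epsilon$ rather than attempting to define positivity by a single uniform bound. Beyond that, one should note that the proposition does not claim $C_n\cap(-C_n)=\{0\}$; properness of the cones (and hence the fact that $\cl{S}/J$ is genuinely an operator system) is a separate issue that belongs to the characterization of kernels and is not needed at this stage.
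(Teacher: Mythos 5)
Your verification of the cone axioms, of the compatibility $\alpha^* C_n\alpha\subseteq C_m$ via the $\|\alpha\|^2$-rescaling, of the order unit and Archimedean properties, and of the complete positivity of $q$ is correct and is essentially the standard argument (the paper itself does not prove this proposition but imports it from \cite{kavruk2013quotients}, so there is no in-paper proof to compare routes with). There is, however, one genuine gap: you explicitly decline to verify that $C_n\cap(-C_n)=\{0\}$, asserting that properness ``is not needed at this stage.'' In the definition of a matrix-ordered $*$-vector space used in this literature, each pair $(M_n(\cl{S}/J),C_n)$ is required to be an ordered vector space, and that includes $C_n\cap(-C_n)=\{0\}$; without it the conclusion of the proposition is not established. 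More tellingly, properness is the \emph{only} point at which the hypothesis that $J$ is a kernel --- rather than an arbitrary $*$-closed subspace --- enters; a proof that never uses this hypothesis beyond $*$-closedness cannot be complete, and indeed for a general $*$-closed subspace the construction can collapse (e.g.\ if $1\in J$ every self-adjoint coset lands in $C_n\cap(-C_n)$).

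The missing step is short. Write $J=\ker\phi$ for a unital completely positive $\phi:\cl{S}\to\cl{T}$. If $\pm(x_{ij}+J)\in C_n$, then for every $\epsilon>0$ there are $(k_{ij}^{\pm})\in M_n(J)$ with $\epsilon(I_n\otimes 1)\pm(x_{ij})+(k_{ij}^{\pm})\in M_n(\cl{S})^+$. Applying $\phi^{(n)}$ annihilates the witnesses and yields $\epsilon(I_n\otimes 1_{\cl{T}})\pm(\phi(x_{ij}))\in M_n(\cl{T})^+$ for all $\epsilon>0$; the Archimedean property of $\cl{T}$ then gives $\pm(\phi(x_{ij}))\in M_n(\cl{T})^+$, and properness of $M_n(\cl{T})^+$ forces $(\phi(x_{ij}))=0$, i.e.\ every $x_{ij}\in J$ and $(x_{ij}+J)=0$. (The same map shows $1\notin J$, so the proposed unit $1+J$ is nonzero.) With this paragraph added, your proof is complete.
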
'
 This shows that $(\cl{S}/J,\{C_n\}_{n=1}^{\infty}),1+J)$ is an abstract operator system and is therefore an concrete one by Theorem \ref{choieffros}.
\begin{defn}%\cite{kavruk2013quotients}
Let $\cl{S}$ be an operator system and $J\subseteq \cl{S}$  be  kernel. We call the operator system $(\cl{S}/J,\{C_n\}_{n=1}^{\infty}),1+J)$
defined in Proposition \ref{quotientoperatorsystemprop} the \textbf{quotient operator system}.
\end{defn}

\begin{defn}%\cite{kavruk2013quotients}
 Let $J$ be a kernel and define
 \[ D_n=\{(x_{ij}+J)\in M_n(\cl{S}/J): \text{ there exists } y_{ij}\in J \text{ such that } (x_{ij}+y_{ij})\in M_n(\cl{S})^+\}, \]
 then $J$ is \textbf{completely order proximinal} if $C_n=D_n$ for all $n\in \bb{N}$.
\end{defn}
\begin{defn}
 Let $\cl{S}$, $\cl{T}$  be operator systems and $\phi:\cl{S}\to \cl{T}$ be  a completely positive map, then $\phi$ is called a \textbf{complete quotient map} if
 $\cl{S}/\Ker\phi$ is completely order isomorphic to $\cl{T}$.
\end{defn}
%%%%%%%%%%%%%%%%%%%%%%%%%%%%%%%%%%%%%%%%%%%%%%%%%%%%%%%%%%%%%%%%%%%%%%%%%%%%%%%
\subsection{Cuntz Algebra and Toeplitz-Cuntz Algebra}
In this section, we review some fundamental facts about the Cuntz algebra as well as the Toeplitz-Cuntz algebra. See \cite{cuntz1977simplec} for more details.

The Cuntz algebra $\cl{O}_n$ ($2\leq n<+\infty$) is the universal $C^*$-algebra generated by $n$ isometries $S_1,\dots,S_n$ with $\sum_{i=1}^nS_iS^*_i= I$, where 
$I$ is the identity operator. The algebra $\cl{O}_n$ enjoys the following universal property:
If $T_1,\cdots,T_n$ are $n$ isometries satisfying $\sum_{i=1}^nT_iT^*_i= I$, then there is a surjective $*$-homomorphism
$\pi:C^*(S_1,\dots,S_n)\to C^*(T_1,\dots,T_n)$ such that $\pi(S_i)=T_i$ for $1\leq i\leq n$.

The Cuntz algebra $\cl{O}_\infty$ is the universal $C^*$-algebra generated by the isometries $\{S_i\}_{i=1}^\infty$ with $\sum_{i=1}^nS_iS^*_i< I$,  for every $n\in \bb{N}$. 
It has similar universal properties.

The Toeplitz-Cuntz algebra $\cl{TO}_n$ ($2\leq n<+\infty$) is the universal $C^*$-algebra generated by $n$ isometries $S_1,\cdots,S_n$ with
$\sum_{i=1}^nS_iS^*_i\leq I$, and it has the following universal property:
If $T_1,\cdots,T_n$ are $n$ isometries satisfying $\sum_{i=1}^nT_iT^*_i\leq I$, then there is a surjective $*$-homomorphism
$\pi:C^*(S_1,\dots,S_n)\to C^*(T_1,\dots,T_n)$ such that $\pi(S_i)=T_i$ for $1\leq i\leq n$.

\begin{remark}
We note that any $n$ ($1\leq n<+\infty$) isometries $S_1,\cdots,S_n$ with
$\sum_{i=1}^nS_iS^*_i< I$ can be  the generators of $\cl{TO}_n$. This is due to the famous Cuntz-Krieger uniqueness theorem. 
\end{remark}
%%%%%%%%%%%%%%%%%%%%%%%%%%%%%%%%%%%%%%%%%%%%%%%%%%%%%%%%%%
\section{The Operator System  $\cl{S}_n$ Generated by Cuntz Isometries}
 For $2\leq n<+\infty$, let $S_1,\dots,S_n$ be the isometries that generate $\cl{O}_n$, $I$ be the identity,  and we define the operator system $\cl{S}_n$ as:
 \[ \cl{S}_n:=\Span\{I, S_i,S_i^*:1\leq i\leq n\} .  \]
 Similarly, we denote $\cl{S}_\infty$ as the operator system generated by the isometries that generate $\cl{O}_\infty$. Also, for $n=1$, we let $\cl{S}_1$ be the three-dimensional
 operator system generated by the a universal unitary (for example, $z\in C(\bb{T})$). 
 
 Also, let $\hat{S}_1,\cdots,\hat{S}_n$ be $n$ ($1\leq n<+\infty$) isometries with $\sum_{i=1}^n\hat{S}_i\hat{S}^*_i<I$ and set
\[ \hat{\cl{S}}_n=\Span\{ I, \hat{S}_1,\dots,\hat{S}_n, \hat{S}^*_1,\dots,\hat{S}^*_n\} , \]
so that $\hat{\cl{S}}_n$ is the operator system generated by the Toeplitz-Cuntz isometries. 
 \begin{remark}
 It is mentioned in Remark \ref{introremark} that any $n$ isometries satisfying the Cuntz relation give rise to the same $\cl{S}_n$. Similarly, the universal property of 
Toeplitz-Cuntz algebra also implies that
any $n$ isometries satisfying the Toeplitz-Cuntz relation give rise to the same $\hat{\cl{S}}_n$.
 \end{remark}

 \begin{lemma}\label{ToeplitztoCuntz}
 Suppose $T_1,\dots,T_n$ are $n$ ($2\leq n\leq +\infty$) isometries  on a Hilbert space $\cl{H}$ with  $\sum_{i=1}^nT_iT^*_i<I_{\cl{H}}$, then they can be dilated to
 $n$ isometries $\tilde{T}_1,\dots, \tilde{T}_n$ on some Hilbert space $\cl{K}$ with $\sum_{i=1}^n\tilde{T}_i\tilde{T}^*_i=I_{\cl{K}}$.
 \end{lemma}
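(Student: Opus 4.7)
The plan is to construct the dilation explicitly in the upper-triangular block form
\[
\tilde{T}_i = \begin{pmatrix} T_i & b_i \\ 0 & d_i \end{pmatrix} \quad \text{on } \cl{K} = \cl{H} \oplus \cl{L},
\]
where $\cl{L}$ is a suitable auxiliary Hilbert space and the operators $b_i : \cl{L} \to \cl{H}_0 := P\cl{H}$ and $d_i : \cl{L} \to \cl{L}$ are to be chosen. Here $P := I_{\cl{H}} - \sum_{i=1}^n T_i T_i^*$; since each $T_i T_i^*$ is a projection and $\sum_i T_i T_i^* < I_{\cl{H}}$, the standard fact that $p+q \leq I$ forces $pq = 0$ for projections $p,q$ gives pairwise orthogonality of the $T_i T_i^*$, so $P$ itself is a nonzero projection and $\cl{H}_0 = \bigl(\sum_j T_j \cl{H}\bigr)^\perp = \bigcap_j \ker T_j^*$.

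Next I would recast the desired relations as a single unitarity statement. The upper-triangular shape is essentially forced: any bounded isometric extension of the isometry $T_i$ having $T_i$ as its $(1,1)$-block must have a vanishing $(2,1)$-block. Imposing the range constraint $\operatorname{range}(b_i) \subseteq \cl{H}_0$ (which automatically yields $T_j^* b_i = 0$ for all $i,j$), a routine expansion of $\tilde{T}_i^* \tilde{T}_j = \delta_{ij} I_{\cl{K}}$ and $\sum_i \tilde{T}_i \tilde{T}_i^* = I_{\cl{K}}$ shows that every remaining condition consolidates into the single requirement that
\[
U := \begin{pmatrix} b_1 & b_2 & \cdots & b_n \\ d_1 & d_2 & \cdots & d_n \end{pmatrix} : \cl{L}^{\oplus n} \longrightarrow \cl{H}_0 \oplus \cl{L}
\]
be a unitary. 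Indeed, $U^* U = I_{\cl{L}^{\oplus n}}$ packages the isometry and orthogonal-range conditions $b_i^* b_j + d_i^* d_j = \delta_{ij} I_{\cl{L}}$, while $U U^* = I_{\cl{H}_0 \oplus \cl{L}}$ packages $\sum_i b_i b_i^* = I_{\cl{H}_0}$ (which equals $P$ when viewed inside $B(\cl{H})$), $\sum_i b_i d_i^* = 0$, and $\sum_i d_i d_i^* = I_{\cl{L}}$; combined with the identity $\sum_i T_i T_i^* + P = I_{\cl{H}}$, these deliver $\sum_i \tilde{T}_i \tilde{T}_i^* = I_{\cl{K}}$.

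Producing such a $U$ is then a matter of matching Hilbert-space dimensions. I would take $\cl{L}$ to be any infinite-dimensional Hilbert space with $\dim \cl{L} \geq \dim \cl{H}_0$—for instance $\cl{L} = \cl{H}_0 \otimes \ell^2(\bb{N})$ when $n$ is finite, with a mild adjustment for $n = \infty$. Then cardinal arithmetic gives $\dim \cl{L}^{\oplus n} = \dim \cl{L} = \dim(\cl{H}_0 \oplus \cl{L})$, so a unitary $U$ exists; reading off its block entries yields $b_i, d_i$ and hence the desired $\tilde{T}_i$.

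The main conceptual obstacle is not the existence of $U$—which is essentially a counting argument—but the preceding reduction: recognizing that the rectangular isometry $T_0 := (T_1, \ldots, T_n) : \cl{H}^{\oplus n} \to \cl{H}$ has defect precisely $P$, and that all of the Cuntz bookkeeping boils down to enlarging $T_0$ into a genuine unitary $T : \cl{K}^{\oplus n} \to \cl{K}$ by supplying exactly the \emph{missing block} $U$. Once this perspective is in hand, the verification is routine.
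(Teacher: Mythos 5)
Your proof is correct and follows essentially the same route as the paper: both dilate via the upper-triangular $2\times 2$ block ansatz $\tilde{T}_i=\left(\begin{smallmatrix} T_i & * \\ 0 & * \end{smallmatrix}\right)$ on $\cl{H}$ direct sum an auxiliary space, with the off-diagonal blocks mapping into the defect space of $\sum_i T_iT_i^*$. The only difference is in how the auxiliary blocks are produced — the paper writes them down explicitly (putting the whole defect projection into the first summand and choosing partial isometries $Y_i$ by hand), whereas you package all the remaining constraints into a single unitary $U:\cl{L}^{\oplus n}\to \cl{H}_0\oplus\cl{L}$ whose existence follows from a dimension count; your version is a clean abstraction of the same construction.
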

\begin{proof}
Let $M=\Ran (\sum_{i=1}^nT_iT^*_i)$ and hence $M^\perp=\Ran (I_{\cl{H}}-\sum_{i=1}^nT_iT^*_i)$. Note that since $T_i$'s are isometries, $\dim M^\perp\leq \dim M$. 
 Let $P_{M\perp}$ be the projection onto $M^\perp$ and  we define operators $X_i: \cl{H}\to \cl{H}$ as the following:
 \[ X_i=\begin{cases} 
      P_{M\perp} & i=1 \\
      0 & 2\leq i\leq n
   \end{cases}.\]
   Correspondingly, we choose $n$ operators $Y_i:\cl{H}\to \cl{H}$ where $Y_1$ is a partial isometry with initial space $M$ and $Y_i$ ($2\leq i\leq n$) are isometries
such that $\sum_{i=1}^nY_iY_i^*=I_{\cl{H}}$. 

Next, let $\cl{K}=\cl{H}\oplus \cl{H}$, we define $\tilde{T}_i:\cl{K}\to \cl{K}$ by 
\[ \tilde{T}_i=\begin{pmatrix}
                T_i&X_i\\
                0&Y_i
               \end{pmatrix}, \]
               and it is easy to check that 
               \begin{align*}
           \tilde{T}_i^*\tilde{T}_i&=\begin{pmatrix}
                                      T_i^*T_i&T_i^*X_i\\
                                      X_i^*T_i&X_i^*X_i+Y_i^*Y_i
                                     \end{pmatrix}=\begin{pmatrix}
                                     I_{\cl{H}}&0\\
                                     0&I_{\cl{H}}
                                     \end{pmatrix}  \\
    \sum_{i=1}^n\tilde{T}_i\tilde{T}_i^*&=\sum_{i=1}^n\begin{pmatrix}
                                                       T_iT_i^*+X_iX_i^*&X_iY_i^*\\
                                                       Y_iX_i^*&Y_iY_i^*
                                                      \end{pmatrix}=\begin{pmatrix}
                                                       I_{\cl{H}}&0\\
                                     0&I_{\cl{H}}
                                     \end{pmatrix}.
\end{align*}
Hence, $\tilde{T}_1,\dots, \tilde{T}_n$ are the desired dilations.
\end{proof}
\begin{cor}\label{CuntzandToeplitzCuntzidentical}
 The operator system $\hat{\cl{S}}_n$ is unitally completely isomorphic to $\cl{S}_n$ in the canonical way, that is, there exists a unital complete order isomorphism 
 $\phi:\hat{\cl{S}}_n\to \cl{S}_n$ such that $\phi(\hat{S}_i)=S_i$. 
 \end{cor}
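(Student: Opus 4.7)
The plan is to exhibit the canonical map $\hat{S}_i \mapsto S_i$ as a unital complete order isomorphism by constructing UCP maps in both directions; the forward direction uses the universal property of $\cl{TO}_n$, while the reverse direction uses the dilation from Lemma~\ref{ToeplitztoCuntz} together with the universal property of $\cl{O}_n$.

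For the forward map, since $\sum_{i=1}^n S_iS_i^* = I \leq I$, the Cuntz isometries satisfy the Toeplitz--Cuntz relation. The universal property of $\cl{TO}_n$ then yields a unital surjective $*$-homomorphism $\pi : \cl{TO}_n \to \cl{O}_n$ with $\pi(\hat{S}_i) = S_i$. Restricting $\pi$ to $\hat{\cl{S}}_n$ produces a UCP map $\phi : \hat{\cl{S}}_n \to \cl{S}_n$ carrying $\hat{S}_i$ to $S_i$, $\hat{S}_i^*$ to $S_i^*$ and $I$ to $I$.

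For the reverse direction, I would apply Lemma~\ref{ToeplitztoCuntz} to dilate $\hat{S}_1,\dots,\hat{S}_n \in B(\cl{H})$ to Cuntz isometries $\tilde{S}_1,\dots,\tilde{S}_n \in B(\cl{K})$ with $\cl{K} = \cl{H}\oplus \cl{H}$ and block form
\[ \tilde{S}_i = \begin{pmatrix} \hat{S}_i & X_i \\ 0 & Y_i \end{pmatrix}. \]
The universal property of $\cl{O}_n$ gives a unital $*$-homomorphism $\rho : \cl{O}_n \to B(\cl{K})$ with $\rho(S_i) = \tilde{S}_i$. Let $V : \cl{H} \to \cl{K}$ be the isometric inclusion into the first summand and define $\psi(x) = V^*\rho(x)V$. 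Then $\psi$ is UCP, and the block form immediately yields $\psi(S_i) = \hat{S}_i$, $\psi(S_i^*) = \hat{S}_i^*$, and $\psi(I) = I_{\cl{H}}$, so $\psi$ restricts to a UCP map $\cl{S}_n \to \hat{\cl{S}}_n$.

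Finally, $\psi \circ \phi$ and $\phi \circ \psi$ are linear and fix the spanning sets $\{I, \hat{S}_i, \hat{S}_i^*\}$ and $\{I, S_i, S_i^*\}$ respectively, so both compositions are the identity. Hence $\phi$ is a UCP bijection with UCP inverse $\psi$, i.e.\ a complete order isomorphism. The substantive step is the reverse direction: one must pull back positivity from $\cl{S}_n$ to $\hat{\cl{S}}_n$, and the dilation lemma was designed precisely so that compression to the first summand recovers $\hat{S}_i$, making the Stinespring-type compression $V^*\rho(\cdot)V$ land exactly in $\hat{\cl{S}}_n$.
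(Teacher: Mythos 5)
Your proof is correct and follows essentially the same route as the paper: the substantive direction is handled exactly as the paper intends, by dilating the Toeplitz--Cuntz isometries to Cuntz isometries via Lemma~\ref{ToeplitztoCuntz} and compressing to the first summand. The only cosmetic difference is in the easy direction, where you invoke the canonical surjection $\cl{TO}_n\to\cl{O}_n$ rather than dilating the Cuntz isometries to Toeplitz--Cuntz isometries and compressing, but the two arguments are interchangeable.
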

\begin{proof}
Since it is easy to see that Cuntz isometries can also be dilated to Toeplitz-Cuntz isometries, the corollary follows by the fact that compressions are completely positive.
\end{proof}
\begin{cor}
We have that  if $1\leq n<m\leq +\infty$, then $\cl{S}_n\subseteq_{\operatorname{c.o.i}}\cl{S}_m$ via the natural embedding.  
\end{cor}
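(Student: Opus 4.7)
The plan is to factor the natural embedding $\cl{S}_n \hookrightarrow \cl{S}_m$ through $\hat{\cl{S}}_n$, exploiting the observation that the first $n$ Cuntz generators of $\cl{S}_m$ form a Toeplitz-Cuntz family rather than a Cuntz family. Let $T_1, \ldots, T_m$ denote the generators of $\cl{O}_m$ (with $T_1, T_2, \ldots$ for $m = \infty$), so the natural embedding sends $S_i \mapsto T_i$ for $1 \le i \le n$.

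First I would verify that $T_1, \ldots, T_n$ satisfy the strict Toeplitz-Cuntz relation $\sum_{i=1}^n T_i T_i^* < I$. When $m < \infty$, this equals $I - \sum_{i=n+1}^m T_i T_i^*$, where the subtracted sum is a nonzero projection because the $T_i T_i^*$ are mutually orthogonal nonzero projections summing to $I$. When $m = \infty$, strict inequality is built into the defining relation of $\cl{O}_\infty$.

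Next, set $\cl{R}_n := \Span\{I, T_1, \ldots, T_n, T_1^*, \ldots, T_n^*\} \subseteq \cl{S}_m$. By the universal property of $\cl{TO}_n$ (invoking the Cuntz-Krieger uniqueness theorem as noted in the remark following its definition), the assignment $\hat{S}_i \mapsto T_i$ extends to a $*$-isomorphism $\cl{TO}_n \cong C^*(T_1, \ldots, T_n)$, which restricts to a unital complete order isomorphism $\hat{\cl{S}}_n \cong \cl{R}_n$. Composing with the canonical isomorphism of Corollary \ref{CuntzandToeplitzCuntzidentical} yields a unital complete order isomorphism $\cl{S}_n \cong \cl{R}_n \subseteq \cl{S}_m$ sending $S_i \mapsto T_i$, which is exactly the natural embedding. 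The Archimedean property of the unit is automatic because $\cl{R}_n$ sits inside a concrete $C^*$-algebra, where the positive cone is closed.

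The main delicate point is the boundary case $n = 1$: here $\cl{S}_1$ is defined via a universal unitary (Cuntz algebras being reserved for $n \ge 2$), whereas $T_1 \in \cl{S}_m$ is a non-unitary isometry. One must check that the dilation argument behind Corollary \ref{CuntzandToeplitzCuntzidentical} extends to this case; this is furnished by the classical Sz.-Nagy isometric dilation of a non-unitary isometry to a unitary (together with Coburn's theorem identifying $C^*(T_1)$ with the Toeplitz algebra), which plays the role of Lemma \ref{ToeplitztoCuntz} in this degenerate setting. Since compressions are completely positive, the conclusion carries through.
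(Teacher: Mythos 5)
Your proof is correct and is essentially the paper's argument: the paper's entire proof is ``immediate from the last corollary,'' meaning exactly your route of observing that the first $n$ generators of $\cl{O}_m$ satisfy the strict Toeplitz--Cuntz relation, so their span is a copy of $\hat{\cl{S}}_n$, which Corollary \ref{CuntzandToeplitzCuntzidentical} identifies with $\cl{S}_n$. Your extra care with the $n=1$ case (via Coburn's theorem and unitary dilation of a non-unitary isometry) fills in a point the paper glosses over, since its dilation lemma is only stated for $n\geq 2$.
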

\begin{proof}
 This is immediate from the last corollary.
\end{proof}
\begin{cor}
For each $1\leq i\leq n$ $1\leq n\leq +\infty$, we define $\rho: \cl{S}_n\to \cl{S}_n$, $X\mapsto S_i^*XS_i$, then $\rho$ is a complete positive projection whose range is completely 
order isomorphic to $\cl{S}_1$.
\end{cor}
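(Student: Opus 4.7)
The plan is to check three things in order: (i) $\rho$ is unital completely positive, (ii) $\rho^2=\rho$, and (iii) $\Ran(\rho)$ is completely order isomorphic to $\cl{S}_1$.

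For (i), since $S_i^*S_i=I$ we have $\rho(I)=I$, and conjugation by any operator is completely positive, so $\rho$ is unital completely positive as a map on $B(\cl{H})$. To see that $\rho$ carries $\cl{S}_n$ into itself, I use the Cuntz relation $S_i^*S_j=\delta_{ij}I$ to compute
\[ \rho(S_j)=S_i^*S_jS_i=\delta_{ij}S_i,\qquad \rho(S_j^*)=\delta_{ij}S_i^*,\qquad \rho(I)=I. \]
For (ii), these formulas show $\rho^2$ and $\rho$ agree on the spanning set $\{I,S_j,S_j^*:1\leq j\leq n\}$, so $\rho^2=\rho$ by linearity, and from them one also reads off $\Ran(\rho)=\Span\{I,S_i,S_i^*\}$, which is three-dimensional.

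For (iii), I first observe that $S_iS_i^*=I-\sum_{j\neq i}S_jS_j^*<I$, since each $S_jS_j^*$ with $j\neq i$ is a nonzero projection. Hence $S_i$ is a single Toeplitz-Cuntz isometry, and the Cuntz-Krieger uniqueness theorem mentioned after Section 2.3 identifies $C^*(S_i)$ canonically with $\cl{TO}_1$, inducing a complete order isomorphism $\Span\{I,S_i,S_i^*\}\cong\hat{\cl{S}}_1$. Combining this with Corollary \ref{CuntzandToeplitzCuntzidentical} applied to $n=1$ yields $\Ran(\rho)\cong\hat{\cl{S}}_1\cong\cl{S}_1$, as desired.

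The only delicate point is (iii): identifying the concrete operator system $\Span\{I,S_i,S_i^*\}\subseteq \cl{O}_n$ with the ``universal'' $\hat{\cl{S}}_1$ requires not just the existence of a unital completely positive surjection from the universal Toeplitz-Cuntz algebra, but the injectivity of the associated $*$-homomorphism on the ambient $C^*$-algebra; this is exactly what the Cuntz-Krieger uniqueness theorem provides, after which everything else is a formal consequence of the already-established Corollary \ref{CuntzandToeplitzCuntzidentical}.
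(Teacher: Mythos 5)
Your proof is correct and takes essentially the same route as the paper's: check that $\rho$ is unital completely positive, verify $\rho^2=\rho$ and compute $\Ran(\rho)=\Span\{I,S_i,S_i^*\}$ from the relation $S_i^*S_j=\delta_{ij}I$, and then identify this span with $\cl{S}_1$ by observing that $S_i$ is a single Toeplitz--Cuntz isometry. You simply make explicit the uniqueness-theorem step and the appeal to Corollary \ref{CuntzandToeplitzCuntzidentical} for $n=1$, which the paper leaves implicit.
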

\begin{proof}
 Clearly $\rho$ is completely positive. Due to the Cuntz relation, it is easily seen that $\rho\circ \rho=\rho$, and $\Ran \rho=\Span\{I, S_i,S_i^*\}$. However,
$S_i$ is a single Toeplitz-Cuntz isometry and hence $\Span\{I, S_i,S_i^*\}=\cl{S}_1$ completely order isomorphically.
\end{proof}

\begin{defn}
 The $n$-tuple of  operators $(A_1,\dots,A_n)$ is called a \textbf{row contraction} if $\sum_{i=1}^nA_iA_i^*\leq I$ ,where $I$ is the identity operator.  
\end{defn}
Bunce proved in \cite{bunce1984models} that any family of $n$ operators $\{A_i\}_{i=1}^n$ with $\sum_{i=1}^nA_i^*A_i\leq I$  can be dilated to $n$ coisometries 
with orthogonal initial spaces. Here, we rephrase that proposition for isometric dilation of row contractions. 
\begin{prop}\label{Buncedilation}
 Let $(A_1,\dots,A_n)$ be a row contraction on some Hilbert space $\cl{H}$, then there exists isometries $W_1,\dots,W_n$ with 
 $W_i^*W_j=0$ if $i\neq j$, such that $P_{\cl{H}}W_i|_{\cl{H}}=A_i$. 
\end{prop}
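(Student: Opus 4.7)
The plan is to recognize this proposition as the adjoint rephrasing of Bunce's theorem (cited in the preceding paragraph) and to reduce to it by taking adjoints. The hypothesis $\sum_{i=1}^n A_iA_i^* \leq I$ is exactly Bunce's hypothesis applied to the family $B_i := A_i^*$, since $\sum B_i^*B_i = \sum A_iA_i^* \leq I$. I would therefore first invoke Bunce's theorem on $\{B_i\}_{i=1}^n$ to obtain coisometries $V_1,\dots,V_n$ on some Hilbert space $\cl{K}\supseteq \cl{H}$ with mutually orthogonal initial spaces and satisfying $P_\cl{H}V_i|_\cl{H}=B_i=A_i^*$, and then set $W_i:=V_i^*$.

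The verification then breaks into three short steps. First, $V_iV_i^*=I_\cl{K}$ is literally $W_i^*W_i=I_\cl{K}$, so each $W_i$ is an isometry. Second, taking adjoints in Bunce's dilation identity gives
\[ P_\cl{H}W_i|_\cl{H} \;=\; P_\cl{H}V_i^*|_\cl{H} \;=\; (P_\cl{H}V_i|_\cl{H})^* \;=\; (A_i^*)^* \;=\; A_i, \]
which is the desired dilation property. Third, the initial space of the partial isometry $V_i$ is $(\ker V_i)^\perp = \overline{\Ran V_i^*} = \Ran W_i$, and this range is closed because $W_i$ is an isometry; hence the statement that the $V_i$ have mutually orthogonal initial spaces translates exactly into $\Ran W_i \perp \Ran W_j$ for $i\neq j$, i.e., $W_i^*W_j=0$.

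The only delicate point, and the one I expect to be the main obstacle, is the translation in the third step: one must verify carefully that orthogonality of the initial spaces of the coisometries $V_i$ is indeed equivalent to range-orthogonality of the $W_i = V_i^*$, using the correspondence between the initial space of a partial isometry and the range of its adjoint. If one wished to avoid invoking Bunce's theorem, a self-contained alternative is available: view $R:=[A_1,\dots,A_n]:\cl{H}^n\to\cl{H}$ as a contraction with $RR^*\leq I_\cl{H}$, form the defect operator $D_R:=(I_{\cl{H}^n}-R^*R)^{1/2}$, and consider the Sch\"affer-style isometry $V:=\binom{R}{D_R}:\cl{H}^n\to\cl{H}\oplus\cl{H}^n$. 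Reading $V$ as a row, its $n$ block-columns $V_j:\cl{H}\to\cl{H}\oplus\cl{H}^n$ are isometries with $V_j^*V_k=\delta_{jk}I_\cl{H}$ and with first block-component $A_j$; iterating this construction in a full-Fock-space style model then extends the $V_j$ to isometries $W_j$ on a common larger Hilbert space while preserving orthogonality of ranges. The bookkeeping in that alternative is the reason I would prefer the one-line reduction to Bunce's theorem.
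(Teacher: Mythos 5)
Your proposal is correct and matches the paper's approach: the paper offers no proof at all, merely citing Bunce's coisometric dilation theorem and asserting that the proposition is its "rephrasing" for row contractions, and your adjoint argument (setting $B_i = A_i^*$, $W_i = V_i^*$, and translating orthogonal initial spaces of the $V_i$ into $W_i^*W_j = 0$) is precisely the verification that this rephrasing is valid.
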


 The above proposition implies the following universal property of $\cl{S}_n$:
 \begin{thm}\label{universalproperty}
 The operator system $\cl{S}_n$ has the following universal property:
 \begin{center}
  \noindent Let $(A_1,\cdots,A_n)$ be a row contraction on some Hilbert space $\cl{H}$ and denote 
  \[\cl{T}_n=\Span\{I_\cl{H},A_i,A_i^*:1\leq i\leq n\}\]
  so that $\cl{T}_n$ is an operator system,  
  then there exists a unital completely positive map $\phi:\cl{S}_n\to \cl{T}_n$ such that $\phi(S_i)=A_i$. 
 \end{center}
\end{thm}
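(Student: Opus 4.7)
The plan is to exhibit $\phi$ as the composition of three maps: an isomorphism onto the Toeplitz-Cuntz picture of $\cl{S}_n$, a restriction of a $*$-homomorphism coming from the universal property of $\cl{TO}_n$, and a compression back to $\cl{H}$.

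First I would invoke Bunce's dilation (Proposition \ref{Buncedilation}) to lift the row contraction $(A_1,\dots,A_n)$ to isometries $W_1,\dots,W_n$ on some Hilbert space $\cl{K}\supseteq \cl{H}$ satisfying $W_i^*W_j=0$ for $i\ne j$ and $P_\cl{H} W_i|_\cl{H} = A_i$. The orthogonality condition $W_i^*W_j=0$ means that the ranges $W_i\cl{K}$ are mutually orthogonal, so $W_iW_i^*$ are pairwise orthogonal projections and consequently $\sum_{i=1}^n W_iW_i^*\le I_\cl{K}$. Thus $(W_1,\dots,W_n)$ is an $n$-tuple of Toeplitz-Cuntz isometries.

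Second, by the universal property of the Toeplitz-Cuntz algebra recalled in Section 2.3, there is a unital $*$-homomorphism
\[
\pi: C^*(\hat S_1,\dots,\hat S_n) \longrightarrow C^*(W_1,\dots,W_n), \qquad \pi(\hat S_i)=W_i.
\]
Restricting $\pi$ to $\hat{\cl{S}}_n$ produces a unital completely positive map $\hat\pi:\hat{\cl{S}}_n\to B(\cl{K})$ sending $\hat S_i$ to $W_i$. Corollary \ref{CuntzandToeplitzCuntzidentical} provides a unital complete order isomorphism $\psi:\cl{S}_n\to \hat{\cl{S}}_n$ with $\psi(S_i)=\hat S_i$, so the composition $\hat\pi\circ\psi:\cl{S}_n\to B(\cl{K})$ is unital and completely positive and carries $S_i$ to $W_i$.

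Finally, the compression $\Phi:B(\cl{K})\to B(\cl{H})$ defined by $\Phi(X)=P_\cl{H} X|_\cl{H}$ is unital and completely positive. Setting $\phi:=\Phi\circ\hat\pi\circ\psi$, we have $\phi(I)=I_\cl{H}$, $\phi(S_i)=P_\cl{H} W_i|_\cl{H}=A_i$ and $\phi(S_i^*)=A_i^*$, so the image of $\phi$ is exactly $\cl{T}_n$, yielding the required UCP map. There is no essential obstacle here; the only point that needs a moment's care is checking that Bunce's orthogonal-range dilation does give the Toeplitz-Cuntz relation $\sum W_iW_i^*\le I$, which is what lets us feed the dilation into Corollary \ref{CuntzandToeplitzCuntzidentical}.
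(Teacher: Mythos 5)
Your proposal is correct and follows essentially the same route as the paper's own proof: Bunce's dilation to isometries with orthogonal ranges, the universal property of the Toeplitz--Cuntz algebra restricted to $\hat{\cl{S}}_n$, the identification $\cl{S}_n\cong\hat{\cl{S}}_n$ from Corollary \ref{CuntzandToeplitzCuntzidentical}, and a final compression to $\cl{H}$. Your explicit check that orthogonal ranges give $\sum_i W_iW_i^*\le I$ is a worthwhile detail the paper leaves implicit, but the argument is the same.
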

\begin{proof}
 By Proposition \ref{Buncedilation}, we can dilate $A_1,\cdots,A_n$ to $W_1,\dots,W_n$ with orthogonal ranges, i.e. $\sum_{i=1}^nW_iW_i^*\leq I$. Let 
 $W_n=\Span\{I,W_i,W_i^*:1\leq i\leq n\}$, then the 
 universal property of the Teoplitz Cuntz algebra implies that there exists a unital completely positive map from $\hat{\cl{S}}_n$ to $W_n$ which sends 
 $\hat{S}_i$ to $W_i$. As compressions are always completely positive, we have a unital completely positive map from $\hat{\cl{S}}_n$ to $\cl{T}_n$ mapping
 $\hat{S}_i$ to $A_i$.
 Now the conclusion follows from Remark \ref{CuntzandToeplitzCuntzidentical}. 
\end{proof}
%The last result in this section is a characterization of the dual space of $\cl{S}_n$. Recall that 
%%%%%%%%%%%%%%%%%%%%%%%%%%%%%%%%%%%%%%%%%%%%%%%%%%%%%%%%%%%%%%%%%%%%%%%%%%%%%%%
 \section{The operator system $\cl{E}_n$ and Its Quotinet}
We define an operator system $\cl{E}_n\subseteq M_{n+1}:=M_{n+1}(\bb{C})$ as the following,
\[ \cl{E}_n=\Span\{E_{00},E_{0i},E_{i0},\sum_{i=1}^nE_{ii}: 1\leq i\leq n \}, \]
where $E_{ij}$'s are matrix units in $M_{n+1}$. So every element in $\cl{E}_n$ is of the form,
\[ \begin{pmatrix}
    a_{00}&a_{01}&\cdots&a_{0,n}\\
    a_{10}&b&&\\
    \vdots&&\ddots&\\
    a_{n0}&&&b
   \end{pmatrix} .\]
    By representing the Cuntz isometries $S_1,\dots,S_n$ on some Hilbert space $\cl{H}$, we define an operator $R: \cl{H}^{(n+1)}\to \cl{H}$ by
    $R:=(\frac{\sqrt{2}}{2}I,\frac{\sqrt{2}}{2}S_1^*,\dots,\frac{\sqrt{2}}{2}S_n^*)$. So we know that
   \[ R^*R=\begin{pmatrix}
           \frac{1}{2} I&\frac{1}{2}S_1^*&\cdots&\frac{1}{2}S_n^*\\
            \frac{1}{2}S_1&&&\\
            \vdots&&\bigl(\frac{1}{2}S_iS_j^*\bigr)&\\
            \frac{1}{2}S_n&&&
           \end{pmatrix},  \]
 is positive in $M_{n+1}(B(\cl{H}))$ .
 
Now, we can define a map $\psi:M_{n+1}\to B(\cl{H})$ by
\[ \psi(E_{ij})=(R^*R)_{ij}, \]
where $(R^*R)_{ij}$ denotes the $i,j$-th entry of $R^*R$,
and extend it linearly to $M_{n+1}$. It is straight forward that $\phi$ is unital.
\begin{thm}[Choi]
  Let $\cl{A}$ be a $C^*$-algebra, $\phi:M_n\to \cl{A}$ be linear, and $\{E_{ij}\}$ be the standard matrix units for $M_n$, then the following are equivalent:
  \begin{enumerate}
   \item $\phi$ is completely positive.
   \item $\phi$ is $n$-positive.
   \item $\bigl(\phi(E_{ij})\bigr)_{i,j=0}^n$ is positive in $M_n(\cl{A})$.
  \end{enumerate}
\end{thm}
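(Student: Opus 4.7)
The plan is to prove the cycle (1) $\Rightarrow$ (2) $\Rightarrow$ (3) $\Rightarrow$ (1). The implication (1) $\Rightarrow$ (2) is immediate, since $n$-positivity is a weaker condition than complete positivity. For (2) $\Rightarrow$ (3) I would exhibit a single positive element of $M_n(M_n)$ whose image under $\phi^{(n)}$ is exactly the Choi matrix $(\phi(E_{ij}))$. The natural candidate is $P := (E_{ij})_{i,j} \in M_n(M_n)$ itself: viewing $M_n(M_n) \cong B(\bb{C}^n \otimes \bb{C}^n)$, one checks that $P = \omega\omega^*$ where $\omega := \sum_i e_i \otimes e_i$, so $P \geq 0$. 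Applying the $n$-positive map $\phi^{(n)}$ then gives $(\phi(E_{ij})) \geq 0$.

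The substantive step is (3) $\Rightarrow$ (1). After representing $\cl{A} \subseteq B(\cl{H})$, the positivity of $(\phi(E_{ij}))$ in $M_n(B(\cl{H})) \cong B(\bb{C}^n \otimes \cl{H})$ provides a factorization $(\phi(E_{ij})) = D^*D$ with $D = (d_{ij}) \in M_n(B(\cl{H}))$, for instance $D = (\phi(E_{ij}))^{1/2}$. Reading off entries gives $\phi(E_{ij}) = \sum_k d_{ki}^* d_{kj}$. For each $k$ I would then define $V_k \colon \cl{H} \to \bb{C}^n \otimes \cl{H}$ by $V_k\xi := \sum_i e_i \otimes d_{ki}\xi$ and verify by a direct computation that for any $A = (a_{ij}) \in M_n$,
\[
V_k^*(A \otimes I_{\cl{H}}) V_k = \sum_{i,j} a_{ij}\, d_{ki}^* d_{kj}.
\]
Summing over $k$ yields the Kraus-type formula $\phi(A) = \sum_k V_k^*(A \otimes I_{\cl{H}}) V_k$. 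Complete positivity becomes transparent from this expression: for any $m$ and any $X \in M_m(M_n)^+$, the $m$-fold ampliation gives $\phi^{(m)}(X) = \sum_k (I_m \otimes V_k)^* X (I_m \otimes V_k) \geq 0$.

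The main subtlety I anticipate is the factorization step: the square root $(\phi(E_{ij}))^{1/2}$ lives a priori only in the enveloping von Neumann algebra $M_n(B(\cl{H}))$, not in $M_n(\cl{A})$. This causes no trouble for the conclusion, since the Kraus-type formula only requires the $d_{ki}$'s to be bounded operators on $\cl{H}$, and the output $\phi(A)$ lands back in $\cl{A}$ automatically as a fixed linear combination of the original $\phi(E_{ij})$'s. Beyond this point, the argument is essentially bookkeeping.
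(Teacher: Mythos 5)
The paper states this result (Choi's theorem) as a known fact and gives no proof of its own, so there is nothing internal to compare against; your argument is the standard one and it is correct. The cycle (1)~$\Rightarrow$~(2)~$\Rightarrow$~(3)~$\Rightarrow$~(1) works exactly as you describe: $(E_{ij})=\omega\omega^*\geq 0$ with $\omega=\sum_i e_i\otimes e_i$ handles (2)~$\Rightarrow$~(3), and the factorization $(\phi(E_{ij}))=D^*D$ followed by the Kraus-type formula $\phi(A)=\sum_k V_k^*(A\otimes I_{\cl{H}})V_k$ handles (3)~$\Rightarrow$~(1), with your remark that the entries $d_{ki}$ need only be bounded operators on $\cl{H}$ (not elements of $\cl{A}$) correctly disposing of the one genuine subtlety.
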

Since
\[ \bigl(\psi(E_{ij})\bigr)_{i,j=0}^n=R^*R\geq 0, \] 
 Choi's theorem tells us that $\psi$ is unitally completely positive.

Next, we can easily calculate that $\Ker \psi=\Span\{E_{00}-\sum_{i=1}^nE_{ii}\}$, which will be denoted as $J$ throughout the rest of the paper.
 \begin{prop}\cite{kavruk2011nuclearity}\label{finitedimensionalproximinal}
  Let $J$ be a finite dimensional $*$-subspace in a operator system $\cl{S}$ which contains no positive elements other than $0$, then it is
  a completely order proximinal kernel.
 \end{prop}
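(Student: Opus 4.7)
The strategy is to bootstrap the hypothesis $J \cap \cl{S}^+ = \{0\}$ up to matrix levels, then use finite-dimensional compactness to turn the approximate data defining $C_p$ into exact data, which simultaneously yields proximinality and the operator system structure on $\cl{S}/J$.

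First I would show that $M_p(J) \cap M_p(\cl{S})^+ = \{0\}$ for every $p \in \bb{N}$. If $(j_{ij}) \in M_p(J)$ is positive in $M_p(\cl{S})$, its diagonal entries $j_{ii}$ lie in $J \cap \cl{S}^+ = \{0\}$, so they vanish; a positive matrix over a $C^*$-algebra with zero diagonal is itself zero, giving $(j_{ij})=0$. Note also that $M_p(J)$ is finite-dimensional, so its closed bounded subsets are compact.

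Next, given $(x_{ij}+J) \in C_p$, pick for each $m \in \bb{N}$ an element $k^m \in M_p(J)$ with $\frac{1}{m}I_p + (x_{ij}) + k^m \in M_p(\cl{S})^+$. The heart of the proof is the claim that $\{k^m\}$ is norm-bounded. Suppose otherwise; passing to a subsequence, $\|k^m\| \to \infty$, and the normalized elements $\tilde{k}^m := k^m/\|k^m\|$ lie in the unit sphere of $M_p(J)$, so admit a further subsequential limit $\tilde{k}$ with $\|\tilde{k}\|=1$. Dividing the positivity relation by $\|k^m\|$ and using norm-closedness of $M_p(\cl{S})^+$, the limit satisfies $\tilde{k} \in M_p(J) \cap M_p(\cl{S})^+$, hence $\tilde{k}=0$ by the first step, contradicting $\|\tilde{k}\|=1$. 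Boundedness in hand, extract a convergent subsequence $k^m \to k \in M_p(J)$ and take limits to obtain $(x_{ij}) + k \in M_p(\cl{S})^+$, which places $(x_{ij}+J)$ in $D_p$. Thus $C_p = D_p$.

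Finally I would verify that $J$ is a kernel by checking that $(\cl{S}/J,\{C_n\},1+J)$ is an operator system; the Archimedean property is built into the definition of $C_n$, the unit is clearly an order unit, and the only axiom requiring work is $C_n \cap (-C_n) = \{0\}$. If $s + J \in C_n \cap (-C_n)$, the proximinality just established provides $k,k' \in M_n(J)$ with $s+k, \,-s+k' \in M_n(\cl{S})^+$. Summing gives $k+k' \in M_n(J) \cap M_n(\cl{S})^+$, which is zero; then $s+k$ and $-(s+k)$ are both positive, forcing $s+k=0$ and $s+J=0$. The quotient map $\cl{S} \to \cl{S}/J$ is then unital completely positive with kernel exactly $J$.

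The main obstacle is the boundedness claim in the second paragraph: it is the one place where both hypotheses (finite-dimensionality, for compactness of the unit ball in $M_p(J)$, and the no-non-zero-positives condition, to derive the contradiction) are genuinely used. Everything else is bookkeeping once that step is in place.
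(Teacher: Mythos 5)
The paper does not actually prove this proposition—it imports it from the cited reference \cite{kavruk2011nuclearity}—so there is no internal proof to compare against; your argument is correct and is essentially the standard one from that source: reduce to $M_p(J)\cap M_p(\cl{S})^+=\{0\}$ via diagonal compressions, then use compactness of bounded sets in the finite-dimensional space $M_p(J)$ together with the normalization trick to force the approximating elements $k^m$ to stay bounded, pass to a limit to get $C_p=D_p$, and deduce properness of the quotient cones (hence that $J$ is a kernel) from proximinality. The only point worth making explicit is that the norm-closedness of $M_p(\cl{S})^+$ used twice in your limit arguments is justified by realizing $\cl{S}$ concretely via Theorem \ref{choieffros} (or by the Archimedean property directly), after which every step goes through.
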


\begin{lemma}\label{Jcompleteorderproximimal}
 The kernel  $J$ it  is completely order proximinal.
\end{lemma}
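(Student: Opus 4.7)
The plan is to apply Proposition \ref{finitedimensionalproximinal} directly, so the task reduces to verifying its three hypotheses for the one-dimensional subspace $J = \Span\{E_{00}-\sum_{i=1}^n E_{ii}\}\subseteq \cl{E}_n$.

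First I would note that $J$ is finite dimensional (in fact one-dimensional) and $*$-closed, since its generator $E_{00}-\sum_{i=1}^nE_{ii}$ is a self-adjoint diagonal matrix. These two points are immediate from the definition.

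The remaining condition to check is that $J$ contains no positive element other than $0$. Here I would argue as follows: every self-adjoint element of $J$ has the form $c(E_{00}-\sum_{i=1}^n E_{ii})$ for some $c\in \bb{R}$, which, viewed inside $M_{n+1}$, is the diagonal matrix $\operatorname{diag}(c,-c,-c,\dots,-c)$. For this to lie in $M_{n+1}^+$ we would need both $c\geq 0$ and $-c\geq 0$, forcing $c=0$. Since the positive cone of $\cl{E}_n$ is inherited from $M_{n+1}$, this shows $J\cap \cl{E}_n^+=\{0\}$.

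With all three hypotheses verified, Proposition \ref{finitedimensionalproximinal} immediately gives that $J$ is a completely order proximinal kernel. There is no real obstacle here; the statement is essentially a direct specialization of the general finite-dimensional result, and the only content is the easy observation that the self-adjoint generator has both signs on its diagonal, ruling out nonzero positive elements.
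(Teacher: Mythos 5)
Your proposal is correct and follows the same route as the paper: both invoke Proposition \ref{finitedimensionalproximinal} and reduce the matter to checking that $J$ contains no nonzero positive element, which is seen by comparing the signs of the first and second diagonal entries of $c(E_{00}-\sum_{i=1}^n E_{ii})$. Your write-up merely makes explicit the finite-dimensionality and $*$-closedness hypotheses that the paper leaves implicit.
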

\begin{proof}
According to Proposition \ref{finitedimensionalproximinal}, we just need to show that $J$ contains no positive elements other than $0$. This is clear
by considering the first and second diagonal entries of any nonzero element in $J$. Hence, $J$ is completely order proximinal.
\end{proof}

Now, let $\phi=\psi|_{{\cl{E}}_n}$, then $\phi:\cl{E}_n\to \cl{S}_n$ is unitally completely positive whose kernel is also $J$. By Proposition \ref{quotientoperatorsystemprop}
we can form the operator system quotient $\cl{E}_n/J$, and the induced map 
\[ \tilde{\phi}:\cl{E}_n/J\to \cl{S}_n, \quad  x+J\mapsto \phi(x) \]
is unitally completely positive and bijective.  
%%%%%%%%%%%%%%%%%%%%%%%%%%%%%%%%%%%%%%%%%%%%%%%%%%%%%%%%%%%%%%%%%%%%%%%
\section{$\cl{E}_n/J=\cl{S}_n$ and a characterization of positive elements in $M_p(\cl{S}_n)$}
We now state the main result of this paper:
\begin{thm}\label{ScompleteorderisomorphictoEJ}
 We have that $\cl{E}_n/J$ is completely order isomorphic to $\cl{S}_n$ and hence $\phi$ is a complete quotient map.
\end{thm}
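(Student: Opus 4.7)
The plan is to construct a UCP inverse for $\tilde{\phi}: \cl{E}_n/J \to \cl{S}_n$. Since $\tilde{\phi}$ is already a unital CP bijection, it suffices to show $\tilde{\phi}^{-1}$ is completely positive. Rather than directly verifying positivity preservation on every matrix level, I would exploit the universal property of $\cl{S}_n$ (Theorem~\ref{universalproperty}): fix any concrete realization $\iota: \cl{E}_n/J \hookrightarrow B(H)$ (which exists by Theorem~\ref{choieffros}), let $V_i := \iota(2E_{i0}+J) \in B(H)$, and reduce the entire problem to showing that $(V_1,\ldots,V_n)$ is a row contraction. Theorem~\ref{universalproperty} would then produce a UCP map $\sigma: \cl{S}_n \to B(H)$ with $\sigma(S_i) = V_i$; its range lies in $\iota(\cl{E}_n/J)$, so $\bar{\sigma} := \iota^{-1} \circ \sigma: \cl{S}_n \to \cl{E}_n/J$ is UCP, and checking $\tilde{\phi} \circ \bar{\sigma}(S_i) = \phi(2E_{i0}) = S_i$ on generators identifies $\bar{\sigma}$ with $\tilde{\phi}^{-1}$.

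To verify $\sum V_iV_i^* \leq I$, I would apply Arveson's extension theorem to the UCP map $\tilde{\rho} := \iota \circ q : \cl{E}_n \to B(H)$ (where $q$ is the quotient map), extending it to a UCP map $\bar{\rho}: M_{n+1} \to B(H)$. Stinespring's theorem then produces operators $W_0, W_1, \ldots, W_n : H \to H_0$ with $\sum_{i=0}^n W_i^* W_i = I$ and $\bar{\rho}(E_{ij}) = W_i^* W_j$, so $V_i = 2 W_i^* W_0$. The condition $E_{00} - \sum_{i=1}^n E_{ii} \in J \subseteq \Ker \bar{\rho}$ gives $W_0^* W_0 = \sum_{i=1}^n W_i^* W_i$, and combined with unitality this pins down $W_0^* W_0 = \tfrac{1}{2} I$; in particular $\|W_0 W_0^*\| = \tfrac{1}{2}$. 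Then
\[
\sum_{i=1}^n V_i V_i^* \;=\; 4 \sum_{i=1}^n W_i^* (W_0 W_0^*) W_i \;\leq\; 4 \cdot \tfrac{1}{2} \sum_{i=1}^n W_i^* W_i \;=\; 4 \cdot \tfrac{1}{2} \cdot \tfrac{1}{2} I \;=\; I.
\]

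The main obstacle is conceptual rather than technical: recognizing that the universal property of $\cl{S}_n$ reduces the whole complete-order-isomorphism question to a single row-contraction bound in $B(H)$, bypassing the more direct (and apparently much harder) approach of constructing positive preimages in $M_p(\cl{E}_n)$ for each $p$. Once this reduction is in place, the factor of $2$ built into $\phi$ (sending both $E_{00}$ and $\sum_{i\geq 1}E_{ii}$ to $\tfrac{1}{2}I$) is exactly what forces $W_0^* W_0 = I/2$ from the kernel condition, making the row-contraction estimate tight.
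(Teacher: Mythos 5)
Your proposal is correct, and its overall skeleton coincides with the paper's: both reduce the problem to showing that the images $V_i$ of $2E_{i0}+J$ under a concrete embedding of $\cl{E}_n/J$ form a row contraction, and then invoke the universal property of $\cl{S}_n$ (Theorem~\ref{universalproperty}, via Bunce's dilation) to produce the UCP inverse of $\tilde{\phi}$. Where you genuinely diverge is in the proof of the row-contraction estimate, which is the only substantive step. The paper proves it by hand: it observes that the quotient map $q$ is completely positive, so it suffices to exhibit the block matrix $\bigl(2\sum_i E_{ii}, 2E_{i0}; 2E_{0i}, 2E_{00}\delta_{ij}\bigr)$ as positive in $M_{n+1}(M_{n+1})$, which it does by decomposing it into an explicit sum of $n$ visibly positive $2\times 2$-supported blocks. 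You instead extend $\iota\circ q$ to a UCP map $\bar{\rho}$ on all of $M_{n+1}$ by Arveson, use the Choi/Stinespring structure of UCP maps on matrix algebras to write $\bar{\rho}(E_{ij})=W_i^*W_j$ with $\sum_{i=0}^n W_i^*W_i=I$, and extract $W_0^*W_0=\tfrac{1}{2}I$ from the kernel relation $E_{00}-\sum_i E_{ii}\in J\subseteq\Ker\bar{\rho}$; the estimate $\sum_i V_iV_i^*\le 4\sum_i W_i^*(W_0W_0^*)W_i\le I$ then follows since $W_0W_0^*\le\tfrac12 I$. Both arguments are complete; the paper's is more elementary and self-contained, while yours is more structural and makes transparent why the normalization $\phi(E_{00})=\phi(\sum_i E_{ii})=\tfrac12 I$ is exactly what makes the bound tight. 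Two small points worth making explicit if you write this up: the range of $\sigma$ is $\Span\{I_H,V_i,V_i^*\}$, which equals $\iota(\cl{E}_n/J)$ precisely because $E_{00}+J=\sum_i E_{ii}+J=\tfrac12 I_{n+1}+J$, so $\iota^{-1}\circ\sigma$ is well defined; and the identification $\bar{\sigma}=\tilde{\phi}^{-1}$ needs the check on all spanning elements ($I$, $S_i$, $S_i^*$), which is immediate from $\phi(2E_{i0})=S_i$ and unitality.
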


  To prove the theorem, we need to show that $\tilde{\phi}^{-1}$ is also completely positive, which will imply  that $\tilde{\phi}$ is a complete 
order isomorphism.

To this end, we first notice the following from the definition $ \tilde{\psi}$:
\begin{align*}
 \tilde{\phi}^{-1}(S_i)&=2E_{i0}+J, \\
   \tilde{\phi}^{-1}(I)&=I_{n+1}+J=2E_{00}+J=2\sum_{i=1}^nE_{ii}+J.
\end{align*}
By Theorem \ref{choieffros}, we can always embed $\cl{E}_n/J$ in $B(\cl{K})$ completely order isomorphically. Let the embedding be $\gamma$, and denote
\[ T_i:=\gamma(2E_{i0}+J),\quad  I_\cl{K}:=\gamma(I_{n+1}+J),\]
it is equivalent to show that the map 
\[\hat{\phi}:\cl{S}_n\to B(\cl{K}), \quad S_i\mapsto T_i, \quad S_i^*\mapsto T_i^*,\quad I\mapsto I_\cl{K} \]
is completely positive.

 The proof of the following lemma is quite similar to that of Lemma 3.1 in \cite{paulsen2002completely}, so we omit the proof.
\begin{lemma}
 Let $\cl{H},\cl{K}$ be Hilbert spaces and $T\in B(\cl{H},\cl{K})$. Also, denote $I_{\cl{H}}$ and $I_{\cl{K}}$ as the Identity operators on $\cl{H}$ and $\cl{K}$ respectively. Then
 we have $\|T\|\leq 1$ if and only if
 \[\begin{pmatrix}
    I_{\cl{H}}&T^*\\
    T&I_{\cl{K}}
   \end{pmatrix} \]
is positive in $B(\cl{H}\oplus \cl{K})$.
\end{lemma}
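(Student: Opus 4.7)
The plan is to establish both directions by writing down an explicit Schur-complement-style factorization of the $2\times 2$ block operator matrix, which makes each implication a one-line computation.

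For the $(\Leftarrow)$ direction, assume $\|T\|\leq 1$. I first note that this is equivalent to $I_{\cl{K}}-TT^*\geq 0$ (by the $C^*$-identity applied to $T^*$), so the middle factor in the identity
\[
\begin{pmatrix}I_{\cl{H}}&T^*\\ T&I_{\cl{K}}\end{pmatrix}=\begin{pmatrix}I_{\cl{H}}&0\\ T&I_{\cl{K}}\end{pmatrix}\begin{pmatrix}I_{\cl{H}}&0\\ 0&I_{\cl{K}}-TT^*\end{pmatrix}\begin{pmatrix}I_{\cl{H}}&T^*\\ 0&I_{\cl{K}}\end{pmatrix}
\]
is a positive block-diagonal operator on $\cl{H}\oplus\cl{K}$. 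A straightforward multiplication of the three factors verifies the identity. Since the outer factors are mutual adjoints, the right-hand side has the form $X^{*}DX$ with $D\geq 0$, which is visibly positive.

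For the $(\Rightarrow)$ direction, assume the block operator is positive, and I would simply test it on vectors of the form $(h,-Th)\in\cl{H}\oplus\cl{K}$ with $h\in\cl{H}$ arbitrary. Multiplying out,
\[
\left\langle\begin{pmatrix}I_{\cl{H}}&T^*\\ T&I_{\cl{K}}\end{pmatrix}\begin{pmatrix}h\\ -Th\end{pmatrix},\begin{pmatrix}h\\ -Th\end{pmatrix}\right\rangle=\langle (I_{\cl{H}}-T^{*}T)h,h\rangle,
\]
and nonnegativity for every $h$ yields $I_{\cl{H}}-T^{*}T\geq 0$, i.e.\ $\|T\|\leq 1$.

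Because the lemma is essentially a standard block-matrix reformulation of contractivity, I do not expect any genuine obstacle. The only bookkeeping point is to keep straight when $T^{*}T$ versus $TT^{*}$ appears: the factorization in the forward direction is tailored so that $TT^{*}$ emerges as the Schur complement of the $(1,1)$ block, while the converse direction naturally produces $T^{*}T$ by the choice of test vector. Both conditions are equivalent to $\|T\|\leq 1$, so the two sides match.
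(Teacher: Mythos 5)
Your proof is correct; both directions check out (the three-factor product does multiply out to the given block operator, and the test vector $(h,-Th)$ does produce $\langle (I_{\cl{H}}-T^{*}T)h,h\rangle$). Note, however, that the paper does not actually write out a proof of this lemma: it defers to Lemma 3.1 of Paulsen's book, whose argument is different in flavor from yours. That argument works directly with the quadratic form, expanding
\[
\left\langle\begin{pmatrix}
    I_{\cl{H}}&T^*\\
    T&I_{\cl{K}}
   \end{pmatrix}\begin{pmatrix}h\\ k\end{pmatrix},\begin{pmatrix}h\\ k\end{pmatrix}\right\rangle=\|h\|^2+2\operatorname{Re}\langle T h,k\rangle+\|k\|^2,
\]
bounding the cross term by $2\|T\|\,\|h\|\,\|k\|$ via Cauchy--Schwarz to get positivity when $\|T\|\leq 1$, and choosing near-extremal unit vectors for the converse. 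Your route replaces the forward direction with a Schur-complement factorization $X^{*}DX$ and the converse with a single algebraically chosen test vector, which avoids any norm estimates and makes both implications purely algebraic; the Cauchy--Schwarz argument, on the other hand, needs no factorization and treats both directions with the same displayed identity. Either is a complete and standard proof, so your version is a perfectly acceptable substitute for the omitted one.
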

Using this lemma, we can prove the following.
\begin{prop}\label{Tnrowcontraction}
  We have that $(T_1,\dots,T_n)$ is a row contraction, i.e., $\sum_{i=1}^nT_iT_i^*\leq I_\cl{K}$.
 \end{prop}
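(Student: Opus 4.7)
The plan is to recast the row-contraction inequality as a block matrix positivity statement inside $M_{n+1}(\cl{E}_n/J)$ and then exhibit an explicit positive lift in $M_{n+1}(\cl{E}_n)$. First, applying the preceding lemma to the row operator $R=(T_1,\dots,T_n)\colon \cl{K}^{(n)}\to\cl{K}$ and using $RR^*=\sum_{i=1}^{n}T_iT_i^*$, the desired bound $\sum_{i=1}^{n} T_iT_i^*\le I_{\cl{K}}$ is equivalent to $\|R\|\le 1$, which in turn (after a harmless cyclic permutation of the block decomposition) is equivalent to positivity of
\[
\begin{pmatrix}
I_{\cl{K}} & T_1 & \cdots & T_n\\
T_1^* & I_{\cl{K}} & & \\
\vdots & & \ddots & \\
T_n^* & & & I_{\cl{K}}
\end{pmatrix}\in M_{n+1}(B(\cl{K})).
\]
Since $\gamma\colon \cl{E}_n/J\to B(\cl{K})$ is a unital complete order embedding carrying $I_{n+1}+J\mapsto I_{\cl{K}}$ and $2E_{i0}+J\mapsto T_i$, the displayed positivity is equivalent to the positivity in $M_{n+1}(\cl{E}_n/J)$ of the analogous matrix whose diagonal is $I_{n+1}+J$, whose first row contains $2E_{j0}+J$, whose first column contains $2E_{0i}+J$, and whose other entries are $0$.

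By Lemma \ref{Jcompleteorderproximimal} (complete order proximinality of $J$), that positivity is witnessed by an honest positive lift in $M_{n+1}(\cl{E}_n)$. I would propose
\[
M=\begin{pmatrix}
2\sum_{k=1}^{n}E_{kk} & 2E_{10} & \cdots & 2E_{n0}\\
2E_{01} & 2E_{00} & & \\
\vdots & & \ddots & \\
2E_{0n} & & & 2E_{00}
\end{pmatrix}\in M_{n+1}(\cl{E}_n),
\]
with all remaining blocks equal to $0$. This is a lift because $I_{n+1}-2\sum_{k=1}^{n}E_{kk}=-F$ and $I_{n+1}-2E_{00}=F$, where $F=E_{00}-\sum_{k=1}^{n}E_{kk}$ spans $J$.

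Positivity of $M$ is then a direct computation. Viewing $M$ as an element of $M_{(n+1)^2}$ indexed by pairs $(p,i)$ with $0\le p,i\le n$, every row and column indexed either by $(0,0)$ or by $(p,k)$ with $p,k\ge 1$ is identically zero, while the restriction of $M$ to the remaining $2n$ coordinates $\{(0,p),(p,0):1\le p\le n\}$ decouples into $n$ independent copies of the rank-one positive block $\bigl(\begin{smallmatrix}2 & 2\\ 2 & 2\end{smallmatrix}\bigr)$. Equivalently, setting $v_p=e_0\otimes e_p+e_p\otimes e_0\in \bb{C}^{n+1}\otimes\bb{C}^{n+1}$, a block-by-block comparison shows $M=2\sum_{p=1}^{n}v_pv_p^*$, which is manifestly positive.

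The hard part is finding the correct lift. The naive choice that lifts every diagonal block to $I_{n+1}$ yields a matrix whose $2\times 2$ principal submatrix on rows and columns $(0,p),(p,0)$ equals $\bigl(\begin{smallmatrix}1 & 2\\ 2 & 1\end{smallmatrix}\bigr)$, which has negative determinant. The essential trick is to exploit the freedom afforded by $J$: shifting the $(0,0)$ block down by $F$ and each $(p,p)$ block up by $F$ simultaneously inflates both diagonal entries of the critical submatrix to $2$, producing the rank-one positive block $\bigl(\begin{smallmatrix}2&2\\2&2\end{smallmatrix}\bigr)$; the diagonal entries that are forced to $0$ in the process happen to sit in rows and columns of $M$ that are already identically zero, so no new obstruction appears.
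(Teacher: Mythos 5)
Your proof is correct and follows essentially the same route as the paper: reduce to positivity of the $(n+1)\times(n+1)$ block operator matrix via the norm lemma, pass to the lift whose diagonal blocks are $2\sum_{k}E_{kk}$ and $2E_{00}$ (each differing from $I_{n+1}$ by an element of $J$), and exhibit that lift as a sum of $n$ rank-one positives --- your identity $M=2\sum_{p}v_pv_p^*$ is exactly the paper's decomposition into $n$ positive block matrices, written at the level of vectors. Two immaterial quibbles: for the direction you actually use, only complete positivity of the quotient map is needed (proximinality gives the converse), and your two signs on $F$ are swapped ($I_{n+1}-2\sum_k E_{kk}=F$ and $I_{n+1}-2E_{00}=-F$), which is harmless since $J$ is a linear subspace.
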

\begin{proof}
We represent $T_i$'s on a Hilbert space $\cl{H}$ via some unital one-to-one $*$-homomorphism, so $(T_1,\dots,T_n)\in B(\cl{H}^{(n)},\cl{H})$.
By the above lemma, equivalently, we show the following,
\[ \begin{pmatrix}
    I_\cl{K}&T_1&\cdots&T_n \\
    T_1^*&I_\cl{K}&&  \\
    \vdots&&\ddots&  \\
    T_n^*&&&I_\cl{K}
   \end{pmatrix}\geq 0.
\]
Notice that  $E_{00}+\sum_{i=1}^nE_{ii}=I_{n+1}$ and $E_{00}+J=\sum_{i=1}^nE_{ii}+J$, so
\begin{equation*}
 E_{00}+J=\sum_{i=1}^nE_{ii}+J=\frac{1}{2}I_{n+1}+J
\end{equation*}.

Since the quotient map $q:M_{n+1}\to M_{n+1}/J$ is completely positive,
we just need to show
\[ \begin{pmatrix}
    2\sum_{i=1}^nE_{ii}&2E_{10}&\cdots&2E_{n0}  \\
    2E_{01}&2E_{00}&&\\
    \vdots&&\ddots& \\
   2 E_{0n}&&&2E_{00}
   \end{pmatrix}\geq 0  .\]
 To this end, we write this matrix as a sum of $n$ matrices
 \begin{align*}
  \begin{pmatrix}
    \sum_{i=1}^nE_{ii}&E_{10}&\cdots&E_{n0}  \\
    E_{01}&E_{00}&&\\
    \vdots&&\ddots& \\
    E_{0n}&&&E_{00}
   \end{pmatrix}=&\begin{pmatrix}
  E_{nn}&0&\cdots&0&E_{n0}  \\
    0&\ddots&&&0\\
    \vdots&&\ddots&&\vdots \\
    0&&&\ddots&0 \\
    E_{0n}&0&\cdots&0&E_{00}
   \end{pmatrix}\\
   &+\begin{pmatrix}
  E_{n-1,n-1}&0&\cdots&0&E_{n-1,0}&0  \\
   0&0&\cdots&\cdots&0&0\\
    \vdots&&\ddots&&\vdots &\vdots\\
     0&&&0&0&\vdots\\
      E_{0,n-1}&0&\cdots&0&E_{00}&0 \\
    0&0&\cdots&\cdots&0&0
   \end{pmatrix}+\\
   &\cdots+\begin{pmatrix}
  E_{11}&E_{10}&0&\cdots&0  \\
    E_{01}&E_{00}&0&\cdots&0\\
    0&0&0&\cdots&0 \\
    \vdots&\vdots&&\ddots&\vdots \\
    0&0&\cdots&\cdots&0
   \end{pmatrix}.
 \end{align*}
From the equation above, we can see that  each summand on the right is positive, so the block matrix on the left is positive and the conclusion follows.
 \end{proof}
Now Theorem \ref{ScompleteorderisomorphictoEJ} is straightforward.
 \begin{proof}[Proof of Theorem \ref{ScompleteorderisomorphictoEJ}]
  By Proposition \ref{Tnrowcontraction} and Theorem \ref{universalproperty}, we know that there exists unital completely positive map which sends 
  $S_i$ to $T_i$. However, this maps is necessarily $\hat{\phi}$. 
 \end{proof}
Theorem \ref{ScompleteorderisomorphictoEJ} implies the following corollary which gives a characterization for positive elements in $M_p(\cl{S}_n)$.

\begin{cor}
  We have that $A_0\otimes I+\sum_{i=1}^nA_i\otimes S_i+\sum_{i=1}^nA_i^*\otimes S_i^*\in M_p(\cl{S}_n)^+$  if and only if there exists $B\in M_p$ such that
 \[ \begin{pmatrix}
     A_0&2A_1^*&\cdots&2A_n^* \\
     2A_1&A_0&& \\
     \vdots&&\ddots&\\
     2A_n&&&A_0
    \end{pmatrix}+
    \begin{pmatrix}
     B&&&\\
     &-B&&\\
     &&\ddots&\\
     &&&-B
    \end{pmatrix}\in M_{n+1}(M_p)^+
\]
\end{cor}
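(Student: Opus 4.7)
The idea is to push the positivity question through the complete order isomorphism $\tilde{\phi}:\cl{E}_n/J \to \cl{S}_n$ from Theorem~\ref{ScompleteorderisomorphictoEJ}, and then unfold the resulting quotient positive cone using the complete order proximinality of $J$ (Lemma~\ref{Jcompleteorderproximimal}) together with the fact that $\cl{E}_n$ sits inside $M_{n+1}$ as a unital operator subsystem.

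First, using $\tilde{\phi}^{-1}(I)=I_{n+1}+J$, $\tilde{\phi}^{-1}(S_i)=2E_{i0}+J$, and $\tilde{\phi}^{-1}(S_i^*)=2E_{0i}+J$, I would compute the image of
\[
X := A_0\otimes I + \sum_{i=1}^n A_i\otimes S_i + \sum_{i=1}^n A_i^*\otimes S_i^* \in M_p(\cl{S}_n)
\]
under the map $\mathrm{id}_{M_p}\otimes \tilde{\phi}^{-1}$. Collecting the terms in $M_p\otimes M_{n+1}\cong M_{n+1}(M_p)$, this image is the coset $M(X)+M_p(J)$, where $M(X)$ is exactly the first block matrix appearing in the statement. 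Because $\tilde{\phi}$ is a complete order isomorphism, $X$ is positive in $M_p(\cl{S}_n)$ if and only if $M(X)+M_p(J)$ is positive in $M_p(\cl{E}_n/J)$.

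Next, by Lemma~\ref{Jcompleteorderproximimal}, $J$ is completely order proximinal, so the positive cone of $M_p(\cl{E}_n/J)$ equals the set of cosets admitting a representative lying in $M_p(\cl{E}_n)^+$. Since $J=\Span\{E_{00}-\sum_{i=1}^nE_{ii}\}$ is one-dimensional, every element of $M_p(J)$ has the form $B\otimes(E_{00}-\sum_i E_{ii})$ for a unique $B\in M_p$, and as a block matrix this is precisely $\mathrm{diag}(B,-B,\ldots,-B)$, the second matrix in the statement. Finally, since $\cl{E}_n\subseteq M_{n+1}$ is a complete order injection, $M_p(\cl{E}_n)^+ = M_p(\cl{E}_n)\cap M_{n+1}(M_p)^+$, so positivity in $M_p(\cl{E}_n)$ coincides with positivity in $M_{n+1}(M_p)$. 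Combining these observations yields the stated equivalence.

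There is no serious technical obstacle here: once Theorem~\ref{ScompleteorderisomorphictoEJ} and Lemma~\ref{Jcompleteorderproximimal} are in hand, the corollary is a careful bookkeeping exercise. The only subtle points are (a) checking that $\mathrm{id}_{M_p}\otimes \tilde{\phi}^{-1}$ preserves positivity in both directions, which is exactly the content of ``complete'' in ``complete order isomorphism'', and (b) writing $M_p(J)$ correctly as a one-parameter family of block-diagonal matrices indexed by a single $B\in M_p$, which works because $J$ itself is one-dimensional.
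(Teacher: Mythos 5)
Your proposal is correct and follows essentially the same route as the paper: transfer positivity through $\tilde{\phi}^{-1}$ into $M_p(\cl{E}_n/J)$, invoke the complete order proximinality of $J$ to lift to a positive representative in $M_p(\cl{E}_n)$, identify $M_p(J)$ with the block-diagonal matrices $\operatorname{diag}(B,-B,\dots,-B)$ via the one-dimensionality of $J$, and pass through the canonical shuffle $M_p(\cl{E}_n)\cong \cl{E}_n(M_p)\subseteq M_{n+1}(M_p)$. The only cosmetic difference is that the paper writes out the entrywise computation with the coefficients $a_{ij}^k$ explicitly, whereas you argue directly at the level of tensors.
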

\begin{proof}
 By Theorem \ref{ScompleteorderisomorphictoEJ}, equivalently, we consider 
 \[M=A_0\otimes (I_n+J)+\sum_{i=1}^nA_i\otimes (2E_{i0}+J)+\sum_{i=1}^nA_i^*\otimes (2E_{0i}+J)\in M_p(\cl{E}_n/J)^+ .\]
 
If $A_k=(a_{ij}^k)_{i,j=1}^p$ for $0\leq k\leq n$, then
\[ M=\biggl(a_{ij}^0(I_{n+1}+J)+\sum_{k=1}^na_{ij}^k(2E_{k0}+J)+\sum_{k=1}^n\overline{a^k_{ji}}(2E_{0k}+J)\biggr)_{i,j=1}^p. \]

Lemma \ref{Jcompleteorderproximimal} together with definition of positivity in a quotient operator system imply that
there exists $(J_{ij})\in M_p(J)$ such that
\[M= (a_{ij}^0I_{n+1}+\sum_{k=1}^n2a_{ij}^kE_{k0}+
\sum_{k=1}^n2\overline{a^k_{ji}}E_{0k})+(J_{ij})\in M_p(\cl{E}_n)^+ .\]

Now we let $J_{ij}=b_{ij}(E_{00}-\sum_{k=1}^nE_{kk})$ and $M$
corresponds to
\[ (a_{ij}^0)\otimes I_{n+1}+\sum_{k=1}^n2(a_{ij}^k)\otimes E_{k0}+\sum_{k=1}^n2(\overline{a^k_{ji}})\otimes E_{0k}+(b_{ij})\otimes (E_{00}-\sum_{k=1}^nE_{kk})
\in M_p(\cl{E}_n)^+ .\]

Finally, using the isomorphism $M_p(\cl{E}_n)\cong \cl{E}_n(M_p)$, we know that $M$ 
corresponds to the following positive block matrix in $M_{n+1}(M_p)$,
\[ \begin{pmatrix}
     A_0&2A_1^*&\cdots&2A_n^* \\
     2A_1&A_0&& \\
     \vdots&&\ddots&\\
     2A_n&&&A_0
    \end{pmatrix}+
    \begin{pmatrix}
     B&&&\\
     &-B&&\\
     &&\ddots&\\
     &&&-B
     \end{pmatrix}, \]
     where $B=(b_{ij})$.
 Since isomorphism is used in each step, we know that our conclusion is in fact an ``if and only if'' statement.
\end{proof}

\section*{Acknowledgments}
The author would like to thank Vern I.Paulsen for his inspiring advice and careful reading of the manuscript which lead to many improvements of this paper, and Mark
Tomforde for his helpful suggestions. Also, the author is grateful to the referees for their valuable comments.

  \bibliographystyle{amsplain}
\bibliography{operatorsystem}

 \end{document}